\documentclass[12pt]{amsart}
\usepackage{amsmath, amsthm, amscd, amsfonts}
\usepackage{xcolor}

\newtheorem{theorem}{Theorem}[section]
\newtheorem{lemma}[theorem]{Lemma}

\theoremstyle{definition}
\newtheorem{definition}[theorem]{Definition}
\newtheorem{example}[theorem]{Example}

\theoremstyle{remark}

\newtheorem{remark}[theorem]{Remark}
\numberwithin{equation}{section}

\newfont{\kh}{msbm10}

\begin{document}
\title[Linear maps preserving Kasparov cycles]
{Linear maps preserving Kasparov cycles and the characterization of induced automorphisms}

\author{K. Sharifi}
\address{Kamran Sharifi, \newline Department of Mathematics,
Shahrood University of Technology, P. O. Box 3619995161-316,
Shahrood, Iran} \email{sharifi.kamran@gmail.com}

\subjclass[2010]{Primary 46L08; Secondary 47B49, 47A05, 19K53}
\keywords{Hilbert C*-module, linear preserver, compact operators, Kasparov cycle, spectrum.}

\begin{abstract}
Let $E$ be a Hilbert C*-module and $\mathcal{L}(E)$ the C*-algebra of all bounded adjointable operators on $E$.
An operator $T \in \mathcal{L}(E)$
is a Kasparov cycle if $T^*T - 1$ and $TT^*-1$ are compact operators on $E$.
If $\mathcal{L}(E)\rightarrow \mathcal{L}(E)$ is a prime C*-algebra, and
$\varphi:\mathcal{L}(E)\rightarrow \mathcal{L}(E)$ is a linear map
which is unital and surjective up to compact operators,
and preserves Kasparov cycles in both directions,
then the induced map
$\psi:\mathcal{L}(E)/\mathcal{K}(E) \rightarrow \mathcal{L}(E)/\mathcal{K}(E)$
is either a $*$-automorphism or a $*$-anti-automorphism.
Our work extends the main result of
[\textit{J. Math. Anal. Appl.} \textbf{354} (2009), 625-629] to the set of Kasparov cycles,
showing that the assumption ``$\mathcal{L}(E)/\mathcal{K}(E)$ has real rank zero''
is redundant in our results.
\end{abstract}
\maketitle


\section{Introduction and Preliminaries}
In KK-theory and K-homology, one needs to study unitary operators modulo compact operators,
or what are now known as \emph{Kasparov cycles}. These were first introduced by Brown, Douglas,
and Fillmore \cite{BDF}, Kasparov \cite{KAS}, and more recently by
Bunke and Hirschmann \cite{bunkeh}, and the author \cite{SharAtiy} in
the study of K-functors, extensions of C*-algebras, and index theory.
In this paper, we characterize linear maps $\varphi$ on the C*-algebra of
all bounded adjointable operators $\mathcal{L}(E)$ that preserve Kasparov cycles.
Since the set of Kasparov cycles is a subset of $\mathcal{A}$-Fredholm operators,
our results can be viewed as modifications of the work of Aghasizadeh and
Hejazian \cite{AghaHejazi} to the context of Kasparov cycles. Our findings are also of
independent interest in the framework of Hilbert spaces.

A Jordan isomorphism from an algebra $\mathcal{A}$ to an algebra $\mathcal{B}$ is a
bijective linear map $\varphi$ satisfying $\varphi(T^2)=(\varphi(T))^2$ for every $T \in \mathcal{A}$.
Suppose $\mathcal{B}(H)$ is the C*-algebra of all bounded linear operators on a
Hilbert space $H$. It is well-known that every Jordan automorphism of $\mathcal{B}(H)$
is either an inner automorphism or an inner anti-automorphism (recall that a bijective
linear map on $\mathcal{B}(H)$ is called an anti-automorphism
if $\varphi(ST) = \varphi(T)\varphi(S)$ for all $S, T \in \mathcal{B}(H)$),
but the converse is not true in general. Linear preserver problems ask whether
assuming $\varphi : \mathcal{B}(H) \to \mathcal{B}(H)$ is a bijective linear
map having a certain preserving property implies that such a map must be a
Jordan automorphism or of a similar canonical form. One of the most famous
problems in this direction is Kaplansky's problem \cite{Kaplan}, asking whether
bijective unital linear maps preserving invertibility in both directions are Jordan automorphisms.

The first result on invertibility or spectrum-preserving linear maps is
implicit in the work of Dieudonn\'e \cite{DIEU}. An invertible linear
map $\varphi: \mathcal{M}_n(\mathbb{C}) \to \mathcal{M}_n(\mathbb{C})$
preserves the set of singular matrices if and only if there
exist $n \times n$ invertible matrices $P$ and $Q$ such that
either $\varphi(T) = PTQ$ or $\varphi(T) = PT^t Q$ for all $T$,
where $\mathcal{M}_n(\mathbb{C})$ denotes the C*-algebra of $n \times n$ complex
matrices and $T^t$ denotes the transpose of $T$ with respect to a fixed orthonormal basis.
Jafarian and Sourour extended this result to operator algebras on Banach spaces \cite{Jaf1}.
The problem of characterizing linear maps that preserve the spectrum of each operator,
or part of the spectrum, and structural properties has attracted the attention of many mathematicians;
see, e.g., \cite{BM, Cui1, HouCui1, Jaf1, Mbekhta1, Mbekhta2}.
Frank et al. \cite{FMP, FRMed} and Aghasizadeh and Hejazian \cite{AghaHejazi} have
investigated structure-preserving mappings on Hilbert C*-modules.
In this paper, we study linear maps on $\mathcal{L}(E)$ that preserve Kasparov
cycles on Hilbert C*-modules. This investigation enables us to characterize
linear maps that preserve the spectrum of modular operators.

Let $\mathcal{A}$ be a (possibly non-unital) C*-algebra, and let $E$ be a right Banach $\mathcal{A}$-module
equipped with an $\mathcal{A}$-valued inner product $\langle \cdot, \cdot \rangle : E \times E \to \mathcal{A}$
compatible with the complex structures on $\mathcal{A}$ and $E$, satisfying:
$$ \langle x, y \rangle = \langle y, x \rangle^* \quad \text{and} \quad
\langle x, x \rangle \geq 0,$$
with equality if and only if $x = 0$.
Here $\langle \cdot, \cdot \rangle$ is conjugate-linear in the first variable.
We consider modules that are complete under the norm $\|x\| := \|\langle x,x \rangle\|_{\mathcal{A}}^{1/2}$.
A Hilbert $\mathcal{A}$-submodule $E$ of $F$ is an orthogonal
summand if $F=E \oplus E^\bot$,
where $E^\bot :=\{ y \in F : \langle x,y \rangle = 0 \text{ for all } x \in E \}$.
We denote by $\mathcal{L}(E)$ the C*-algebra of all adjointable operators on $E$,
i.e., maps $T : E \to E$ for which there exists $T^* : E \to E$ with
$\langle Tx, y \rangle = \langle x, T^* y \rangle$ for all $x,y \in E$.
For $x,y \in E$, the operator $\theta_{x,y}(z) = x \langle y,z \rangle$
is bounded and adjointable, with $\theta_{x,y}^* = \theta_{y,x}$.
The closed linear span of $\{ \theta_{x,y} : x,y \in E \}$ in $\mathcal{L}(E)$
is the $*$-ideal of compact operators, denoted by $\mathcal{K}(E)$. The quotient algebra $\mathcal{L}(E)/\mathcal{K}(E)$
is the Calkin algebra of $E$, which coincides with the corona algebra
of $\mathcal{K}(E)$. Closed submodules of Hilbert modules need not be orthogonally complemented at all,
but Lance \cite[Theorem 3.2]{LAN} gives conditions under which they may be. The theory of
adjointable operators on Hilbert C*-modules provides a fundamental framework for operator
algebras and K-theory; see \cite{Black, LAN}.

Let $\pi : \mathcal{L}(E) \to \mathcal{L}(E)/\mathcal{K}(E)$ be the natural quotient map.
For $a \in \mathcal{L}(E)$ or $\mathcal{L}(E)/\mathcal{K}(E)$, the left spectrum $\sigma_l(a)$,
right spectrum $\sigma_r(a)$, and spectrum $\sigma(a)$ are defined as:
\begin{eqnarray*}
\sigma_l(a) &:=& \{ \lambda \in \mathbb{C} : \lambda 1 - a \text{ is not left invertible} \},\\
\sigma_r(a) &:=& \{ \lambda \in \mathbb{C} : \lambda 1 - a \text{ is not right invertible} \}, ~{\rm and}\\
\sigma(a) &:=& \{ \lambda \in \mathbb{C} : \lambda 1 - a \text{ is not invertible} \}.
\end{eqnarray*}
We define the set of \emph{Kasparov cycles} on $E$ by
$$ \mathcal{KC}(E) := \{ T \in \mathcal{L}(E) : \pi(T) \text{ has inverse } \pi(T^*) \}.$$
Thus $T \in \mathcal{L}(E)$ is a Kasparov cycle if and only if
$T^*T = 1$ mod $\mathcal{K}(E)$ and $TT^* = 1$ mod $\mathcal{K}(E)$, where $1$
is the identity of $\mathcal{L}(E)$, i.e., the identity operator $I$ on $E$.
Such operators arise naturally in Kasparov's KK-theory and K-homology \cite{Black}.

\begin{example}\label{Kasparovcycle}
Let $\mathcal{A}$ be a unital C*-algebra and let $H_{\mathcal{A}}$ be the standard
Hilbert $\mathcal{A}$-module countably generated by the orthonormal basis
$\xi_j = (0,\dots,0,1,0,\dots), j \in \mathbb{N}$. Define $T$ on $H_{\mathcal{A}}$ by
$T(\xi_1, \xi_2, \dots) = (0, \xi_1, \xi_2, \dots)$. Then $T \in \mathcal{KC}(H_{\mathcal{A}})$
since $T^*T = I$ and $TT^* - I$ is of finite rank.
\end{example}

The set $\mathcal{KC}(E)$ consists of $\mathcal{A}$-Fredholm operators \cite{bunkeh, SharAtiy},
and our results do not follow from those of Aghasizadeh and Hejazian \cite{AghaHejazi}. Indeed,
the main result of \cite{AghaHejazi}, Theorem 1.2, asserts that if $\varphi:\mathcal{L}(E) \to \mathcal{L}(E)$
is a unital, surjective linear map preserving semi-$\mathcal{A}$-Fredholm operators in both directions,
and $\mathcal{L}(E)/\mathcal{K}(E)$ is a prime C*-algebra with real rank zero,
then $\varphi(\mathcal{K}(E)) \subseteq \mathcal{K}(E)$ and the induced
map $\tilde{\varphi} : \mathcal{L}(E)/\mathcal{K}(E) \to \mathcal{L}(E)/\mathcal{K}(E)$, $\tilde{\varphi}(\pi(T)) = \pi(\varphi(T))$,
is an automorphism or anti-automorphism. In this paper, we extend this result to Kasparov cycles,
showing that the real rank zero assumption on $\mathcal{L}(E)/\mathcal{K}(E)$ is redundant.
Specifically, if $\varphi:\mathcal{L}(E) \to \mathcal{L}(E)$ is linear, unital, and surjective
up to compact operators, then $\sigma(\pi(\varphi(T))) = \sigma(\pi(T))$ for all $T \in \mathcal{L}(E)$
if and only if $\varphi$ preserves Kasparov cycles in both directions,
if and only if $\varphi(\mathcal{K}(E)) \subseteq \mathcal{K}(E)$ and the induced
map $\psi : \mathcal{L}(E)/\mathcal{K}(E) \to \mathcal{L}(E)/\mathcal{K}(E)$, $\psi(\pi(T)) = \pi(\varphi(T))$,
is a continuous $*$-automorphism or $*$-anti-automorphism.

When $E=H$ is a Hilbert space, the Calkin algebra $\mathcal{B}(H)/\mathcal{K}(H)$ is prime,
so our results align with and extend those of Mbekhta \cite{Mbekhta1, Mbekhta2} and Hou--Cui \cite{HouCui1}.

Throughout the rest of this paper, we use $1_{\mathcal{A}}$, or simply $1$,
to denote the identity of a unital C*-algebra $ \mathcal{A}$,
$I$ for the identity operator on a Hilbert module, and $Ker( \cdot )$ for the kernel of an operator.

\section{Linear maps preserving kasparov cycles}
In recent years, there has been significant interest in studying linear maps on
operator algebras that preserve specific properties of operators, cf. \cite{BM, FRMed} and references therein.
In this section, we will characterize the
linear maps on $\mathcal{L}(E)$ which preserves Kasparov cycles in both directions.
Throughout this section, we assume always that  $E$ is a Hilbert module over an arbitrary C*-algebra $ \mathcal{A}$ and
$\pi:\mathcal{L}(E)\rightarrow \mathcal{L}(E)/\mathcal{K}(E)$ denotes the natural quotient map.
To prove our main results, we require the following lemma and remarks.

\begin{lemma}\label{Ulemma1}
Let $C \in \mathcal{L}(E)$, then $C$ is a compact operator if and only if $T+C \in \mathcal{KC}(E)$, for every
$T \in \mathcal{KC}(E)$.
\end{lemma}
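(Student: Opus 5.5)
Both directions will be handled in the Calkin algebra $\mathcal{L}(E)/\mathcal{K}(E)$ via the quotient map $\pi$. There $T\in\mathcal{KC}(E)$ means exactly that $\pi(T)$ is a unitary, so the lemma reduces to the statement that $\pi(C)$ is zero if and only if $u+\pi(C)$ is unitary for every unitary of the form $u=\pi(T)$ with $T\in\mathcal{KC}(E)$.

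For the forward direction, suppose $C\in\mathcal{K}(E)$ and $T\in\mathcal{KC}(E)$. Then $\pi(T+C)=\pi(T)$ and $\pi((T+C)^*)=\pi(T^*)+\pi(C^*)=\pi(T^*)$, since $\mathcal{K}(E)$ is a $*$-ideal. Hence $(T+C)^*(T+C)-I$ and $(T+C)(T+C)^*-I$ differ from $T^*T-I$ and $TT^*-I$ by compact operators, and so they are compact. This gives $T+C\in\mathcal{KC}(E)$.

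For the converse, I will test the hypothesis on the unitary operators $T=\lambda I$ with $\lambda\in\mathbb{C}$ and $|\lambda|=1$; each of these lies in $\mathcal{KC}(E)$. Write $c=\pi(C)$. For every such $\lambda$, the element $\lambda+c$ is unitary in the Calkin algebra, so
\[
1=(\lambda+c)^*(\lambda+c)=1+\bar\lambda c+\lambda c^*+c^*c .
\]
This gives $\bar\lambda c+\lambda c^*+c^*c=0$ for all $|\lambda|=1$. Taking $\lambda=1$ and $\lambda=-1$ and adding the two identities yields $2c^*c=0$, so $c=0$ by the C*-identity. Thus $C\in\mathcal{K}(E)$.

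The argument has no real obstacle. The only points to check are that $\pm I\in\mathcal{KC}(E)$, which holds trivially, and that the quotient $\mathcal{L}(E)/\mathcal{K}(E)$ is a C*-algebra with involution $\pi(T)^*=\pi(T^*)$; both are standard.
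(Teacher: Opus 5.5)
Your proof is correct and follows the paper's argument. You get the forward direction from $\pi(T+C)=\pi(T)$, and for the converse you test $T=\pm I$ and add the two resulting identities to get $c^*c=0$ (the paper writes this as $C^*C\in\mathcal{K}(E)$), then conclude $\pi(C)=0$ by the C*-identity.
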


\begin{proof}
Let $C$ be a compact operator on $E$ and let $T$ be in  $\mathcal{KC}(E)$, then
$\pi(T+C)=\pi(T)$ is a unitary element in $\mathcal{L}(E)/\mathcal{K}(E)$. That is, $T+C \in \mathcal{KC}(E)$.

Conversely, let $C \in \mathcal{L}(E)$ and $T+C \in \mathcal{KC}(E)$, for every
$T \in \mathcal{KC}(E)$. Since both $I$ and $-I$ belong to $\mathcal{KC}(E)$,
both $I + C$ and $-I + C$ are in $\mathcal{KC}(E)$. Applying the hypothesis
that $T + C \in \mathcal{KC}$ for all $T \in \mathcal{KC}$  to the cases $T=I$ and $T=-I$, we obtain
\begin{equation} \label{eq:pos}
C + C^* + C^*C= (I + C)^*(I + C) - I  \in \mathcal{K}(E).
\end{equation}

\begin{equation} \label{eq:neg}
-(C + C^*) + C^*C=(-I + C)^*(-I + C) - I  \in \mathcal{K}(E).
\end{equation}
Multiplying by $\frac{1}{2}$ and adding equations \eqref{eq:pos} and \eqref{eq:neg} yields
$$ C^*C= \frac{1}{2}((C + C^* + C^*C) + (-(C + C^*) + C^*C)) \in \mathcal{K}(E).$$
Now, the canonical image of $C^*C \in \mathcal{K}(E)$ vanishes in $\mathcal{Q}(E)$, and hence
$$ \|\pi(C)\|^2 = \|\pi(C)^*\pi(C)\| = \| \pi(C^*C) \| = \|0\| = 0.$$
This implies $\pi(C) = 0 \in \mathcal{L}(E)/\mathcal{K}(E)$, and therefore $C \in \mathcal{K}(E)$.
\end{proof}

\begin{remark}\label{4uni}
Let $T$ be an arbitrary element in the unit ball of a unital C*-algebra. Then $T=S_1 + i S_2$, where $S_1=\frac{T+T^*}{2}$
and $S_2=\frac{T-T^*}{2i}$ are selfadjoint elements and $T_1=S_1+ i \sqrt{1-S_1^{2}}$, $T_2=S_1- i \sqrt{1-S_1^{2}}$,
$T_3=S_2+ i \sqrt{1-S_2^{2}}$ and $T_4=S_2- i \sqrt{1-S_2^{2}}$ are unitary elements. In particular,
the operator $T$ can be written as a finite linear combination $\sum_{i=1}^{4} \frac{1}{2} \, T_i$ of unitary elements.
\end{remark}

\begin{remark}\label{prime}
The algebra $ \mathcal{A}$ is called {\it prime} if for any two ideals $I$ and $J$ of  $ \mathcal{A}$,
$IJ = \{0\}$ implies $I= \{0\}$ or $J = \{0\}$. In the following theorem we assume that
$\mathcal{L}(E)/\mathcal{K}(E)$ is a prime C*-algebra to utilize
Herstein's theorem (every Jordan automorphism on a prime algebra is either an automorphism or
an anti-automorphism \cite{Her}).
For instance, if $E$ is a Hilbert $\mathcal{K}(H)$-module, then
$\mathcal{L}(E)/\mathcal{K}(E)$ is a prime C*-algebra.
If $E$ is a self-dual Hilbert W*-module, then
$\mathcal{L}(E)/\mathcal{K}(E)$ is a prime C*-algebra,
see \cite{AghaHejazi} for
more detailed information.
\end{remark}

Let $\varphi:\mathcal{L}(E)\rightarrow \mathcal{L}(E)$ be a linear map. Then $\varphi$ is said to be
\textit{unital up to compact operators} if $\varphi(I)=I+K$ for some $K\in \mathcal{K}(E)$, and
$\varphi$ is said to be \textit{surjective up to compact operators} if for every $T$ in $\mathcal{L}(E)$
there exists $S$ in $\mathcal{L}(E)$ such that $\varphi(T)-S \in \mathcal{K}(E)$.

\begin{definition}
We say that a map $\varphi:\mathcal{L}(E)\rightarrow \mathcal{L}(E)$ preserves Kasparov cycles in both directions if
$$ \varphi(T) \in \mathcal{KC}(E) \Longleftrightarrow T \in \mathcal{KC}(E).$$
\end{definition}

\begin{theorem}\label{Uth1}
Let $\varphi:\mathcal{L}(E)\rightarrow \mathcal{L}(E)$  be a linear map which is unital and surjective up to compact operators.
If $ \mathcal{L}(E)/\mathcal{K}(E)$ is a prime C*-algebra, then the following statements are equivalent.
\begin{enumerate}
  \item $\varphi$ preserves Kasparov cycles in both directions.
  \item $\varphi(\mathcal{K}(E)) \subseteq \mathcal{K}(E)$ and the induced map
$\psi:\mathcal{L}(E)/\mathcal{K}(E) \rightarrow \mathcal{L}(E)/\mathcal{K}(E)$,  $\psi  (\pi(T))=\pi(\varphi(T))$ for all
$T \in \mathcal{L}(E),$ is either a continuous $*$-automorphism (i.e.
a continuous isomorphism which is $*$-preserving) or a continuous $*$-anti-automorphism.
\end{enumerate}
\end{theorem}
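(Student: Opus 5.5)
The implication (2)$\Rightarrow$(1) is the easy direction and I would dispose of it first. Suppose $\psi$ is a $*$-automorphism or a $*$-anti-automorphism. Then $\psi$ is unital and bijective, so for any $T$ we have $\pi(T)$ unitary if and only if $\psi(\pi(T))=\pi(\varphi(T))$ is unitary. In the anti-automorphism case this still holds, because $\psi(u)^*\psi(u)=\psi(uu^*)$ and $\psi(u)\psi(u)^*=\psi(u^*u)$. Hence $T\in\mathcal{KC}(E)$ if and only if $\varphi(T)\in\mathcal{KC}(E)$.

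For (1)$\Rightarrow$(2) I would proceed in three steps.

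\textbf{Step 1: $\varphi(\mathcal{K}(E))\subseteq\mathcal{K}(E)$.} Take $C$ compact. I want to use Lemma \ref{Ulemma1}, so I need $S+\varphi(C)\in\mathcal{KC}(E)$ for every $S\in\mathcal{KC}(E)$. By surjectivity up to compacts, write $S=\varphi(T)+K$ with $K$ compact. Then $\varphi(T)\in\mathcal{KC}(E)$, so $T\in\mathcal{KC}(E)$ by (1). By the lemma $T+C\in\mathcal{KC}(E)$, so $\varphi(T)+\varphi(C)\in\mathcal{KC}(E)$, and adding the compact $K$ keeps it in $\mathcal{KC}(E)$. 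Lemma \ref{Ulemma1} then gives $\varphi(C)$ compact. Consequently $\psi$ is a well-defined linear map, it is surjective, and it is unital because $\varphi(I)=I+K$. Moreover $\psi$ maps the unitary group $\mathcal{U}$ of $\mathcal{Q}=\mathcal{L}(E)/\mathcal{K}(E)$ onto $\mathcal{U}$: every unitary of $\mathcal{Q}$ has the form $\pi(S)$ with $S\in\mathcal{KC}(E)$, and the argument above gives $\psi^{-1}(\mathcal{U})=\mathcal{U}$.

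\textbf{Step 2: injectivity and continuity.} If $\psi(\pi(T))=0$, then applying the same lemma argument on the quotient, $\psi(u+\pi(T))=\psi(u)\in\mathcal{U}$ for every unitary $u$, so $u+\pi(T)\in\mathcal{U}$ for all $u$. Lifting, $T+S\in\mathcal{KC}(E)$ for every $S\in\mathcal{KC}(E)$, so $T$ is compact by Lemma \ref{Ulemma1}. Thus $\psi$ is bijective and maps unitaries onto unitaries. For continuity, Remark \ref{4uni} shows that every element $a$ of norm at most $1$ is a sum of four unitaries with coefficients $\tfrac12$, so $\|\psi(a)\|\le 2$; hence $\|\psi\|\le 2$.

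\textbf{Step 3: $\psi$ is a $*$-preserving Jordan map.} This is where I expect the main difficulty. I would use the standard theorem of Russo--Dye type, due to Kadison, Dye, and in this generality Mbekhta and others: a unital bijective bounded linear map between unital C*-algebras that sends unitaries onto unitaries is a Jordan $*$-isomorphism. The direct route is as follows. For self-adjoint $h$, the element $e^{ith}$ is unitary, so $\psi(e^{ith})$ is a unitary-valued curve. Differentiating $\psi(e^{ith})^*\psi(e^{ith})=1$ at $t=0$ gives $\psi(h)^*=\psi(h)$, so $\psi$ is $*$-preserving. The second derivative, combined with the first-order relation $\psi(e^{ith})=1+it\psi(h)-\tfrac{t^2}{2}\psi(h^2)+o(t^2)$, then yields $\psi(h^2)=\psi(h)^2$. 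Polarization extends this to $\psi(T^2)=\psi(T)^2$ for all $T$, so $\psi$ is a Jordan $*$-automorphism.

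Since $\mathcal{Q}$ is prime, Herstein's theorem (Remark \ref{prime}) then shows that $\psi$ is an automorphism or an anti-automorphism, in either case $*$-preserving and continuous. The delicate point in Step 3 is the second-order expansion. The first-order identity alone only gives self-adjointness, so I will carefully compare the $t^2$ coefficients in the unitarity identity to extract $\psi(h^2)=\psi(h)^2$.
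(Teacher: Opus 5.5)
Your proposal is correct and follows the paper's proof: $\psi$ preserves unitaries in both directions, is bounded by $2$ via Remark \ref{4uni}, becomes a $*$-preserving Jordan map by comparing the $t$ and $t^2$ coefficients of $\psi(e^{ith})^*\psi(e^{ith})=1$, and Herstein's theorem then finishes. The local differences are these. You obtain $\varphi(\mathcal{K}(E))\subseteq\mathcal{K}(E)$ from Lemma \ref{Ulemma1} using surjectivity modulo compacts and both directions of (1), whereas the paper computes directly with $\varphi(I\pm T)$, $\varphi(I+iT)$ and $\varphi(I)=I+K$. Your injectivity argument starts from $\varphi(T)$ compact rather than $\varphi(T)=0$, which makes it more careful than the paper's, since the paper's identity $Ker(\psi)=\pi(Ker(\varphi))$ is not justified as written.
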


\begin{proof} It is easy to see that (2) implies (1) by the definition of $\psi$ and the fact that $\psi$
is a unital $*$-isomorphism. The proof of (1) implies (2)  is derived from the following facts.

 We first prove that $\varphi(\mathcal{K}(E)) \subseteq \mathcal{K}(E)$. Let
$T \in \mathcal{K}(E)$, since $\varphi$ is unital and surjective up to compact operators there exist
some compact operators $K$ in $\mathcal{K}(E)$ such that
\begin{eqnarray*}
\varphi(I+T) &=& I+K+\varphi(T),\\
\varphi(I-T) &=& I+K-\varphi(T), ~{\rm and}\\
\varphi(I+iT) &=& I+K+i \varphi(T).
\end{eqnarray*}
The operators $I+T$, $I-T$ and $I+iT$ are Kasparov cycles, so
$I+K+\varphi(T)$, $I+K-\varphi(T)$ and $ I+K+i \varphi(T)$ are Kasparov cycles, too.
Consequently, $\varphi(I+T)^* \varphi(I+T)$, $\varphi(I-T)^* \varphi(I-T)$ and
$\varphi(I+iT)^* \varphi(I+iT)$ are in $I + \mathcal{K}(E)$, which imply
\begin{eqnarray*}
\varphi(T)^* \varphi(T)+ \varphi(T)^*+\varphi(T) & \in & \mathcal{K}(E),\\
\varphi(T)^* \varphi(T)- \varphi(T)^*-\varphi(T) & \in & \mathcal{K}(E), ~{\rm and}\\
\varphi(T)^* \varphi(T)- i \varphi(T)^*+i\varphi(T) & \in & \mathcal{K}(E).
\end{eqnarray*}
We therefore have
\begin{eqnarray*}
\varphi(T)^* \varphi(T) & \in & \mathcal{K}(E),\\
\varphi(T)^* + \varphi(T) & \in & \mathcal{K}(E), ~{\rm and}\\
\varphi(T)^* - \varphi(T) & \in & \mathcal{K}(E),
\end{eqnarray*}
which imply $ \varphi(T) \in \mathcal{K}(E)$, that is, $\varphi(\mathcal{K}(E)) \subseteq \mathcal{K}(E)$.

Since $\varphi(\mathcal{K}(E)) \subseteq \mathcal{K}(E)$
the map $\psi:\mathcal{L}(E)/\mathcal{K}(E) \rightarrow \mathcal{L}(E)/\mathcal{K}(E)$,
$\psi  (\pi(T))=\pi(\varphi(T))$ is well defined and preserves unitary elements. To see
this, we may assume that $ \pi(T)^* \pi(T)= 1_{ \mathcal{L}(E)/\mathcal{K}(E)}=\pi(I)$, since
$\varphi$ preserves Kasparov cycles we find
\begin{eqnarray*}
(\psi( \pi(T)))^*  \psi( \pi(T)) =( \pi ( \varphi(T)))^*  \pi ( \varphi(T)) &=& \pi (  \varphi(T)^* \varphi(T))\\
&=& \pi(I).
\end{eqnarray*}
Similarly $\psi( \pi(T)) (\psi( \pi(T)))^* =\pi(I)$, where $\pi(T) \pi(T)^* = \pi(I)$.

Boundedness of the map $\psi$ is deduced from the preceding fact and Remark \ref{4uni}. Indeed, if
$T \in \mathcal{L}(E)/\mathcal{K}(E)$ and $\|T \| \leq 1$, then $T=\sum_{i=1}^{4} \frac{1}{2} \, T_i$, when each
$T_i$ is a unitary element. Since $\psi$ preserves unitary elements, every $\psi(T_i)$, $i=1, \cdots ,4$ is a unitary
in $\mathcal{L}(E)/\mathcal{K}(E)$ with $\| \psi(T_i) \| \leq 1$, consequently
$$\| \psi(T) \| \leq \frac{1}{2} \sum_{i=1}^{4} \| \psi(T_i) \| \leq 2.$$

We claim that $\psi$ is a $*$-homomorphism or $*$-anti-homomorphism. To see this, let $S$ be a selfadjoint element in
$ \mathcal{L}(E)/\mathcal{K}(E)$, then $e^{itS}$ is a unitary element in $ \mathcal{L}(E)/\mathcal{K}(E)$ for every $t \in \mathbb{R}$.
So far, we know that $\psi$ is a unital continuous linear map which preserves unitary elements, consequently
\begin{eqnarray*}
\pi(I) &=& (\psi(e^{itS}))^* \psi(e^{itS})= (\psi(1+itS+ \frac{(it)^2}{2!}S^2+ \cdots ))^*
\psi(1+it S+ \frac{(it)^2}{2!}S^2+ \cdots )\\
 &=& (1-it(\psi(S))^*- \frac{t^2}{2!} (\psi(S^2))^*+ \cdots ) (1+it\psi(S) - \frac{t^2}{2!}\psi(S^2) + \cdots )\\
 &=& 1+ it(\psi(S) - (\psi(S))^*) -t^2 \, \frac{ \psi(S^2) + (\psi(S^2))^*}{2} + t^2 (\psi(S))^*) \psi(S) + \cdots \,.
\end{eqnarray*}
That is, for every selfadjoint element $S$ in $ \mathcal{L}(E)/\mathcal{K}(E)$ we have
\begin{eqnarray}\label{U5}
(\psi(S))^*=\psi(S),
\end{eqnarray}
and
\begin{eqnarray}\label{U6}
- \frac{1}{2}( \psi(S^2) + (\psi(S^2))^*) + (\psi(S))^*) \psi(S)=0.
\end{eqnarray}
Utilizing the equality (\ref{U5}) and the decomposition $T=S_1+iS_2$ for every
$T \in \mathcal{L}(E)/\mathcal{K}(E)$, where $S_1$ and $S_2$ are selfadjoint, we find $(\psi(T))^*=\psi(T^*)$.
According to equality (\ref{U6}) and the fact that $\psi$ is $*$-preserving,
we get $\psi(S)^2=\psi(S^2)$, for every selfadjoint $S \in \mathcal{L}(E)/\mathcal{K}(E)$.
Since every $T \in \mathcal{L}(E)/\mathcal{K}(E)$ can be written as
$T=S_1+iS_2$, where $S_1$, $S_2$ and $S_1+S_2$ are selfadjoint, we infer
$$ \psi((S_1+S_2)^2)=(\psi(S_1+S_2))^2,$$
which implies
\begin{eqnarray}\label{U7}
 \psi(S_1 S_2+S_2 S_1) = \psi(S_1) \psi(S_2) + \psi(S_2) \psi(S_1),
\end{eqnarray}
and so
\begin{equation}\label{U8}\begin{split}
\psi(T^2) &=\psi(S_1^2) - \psi(S_2^2 ) +i \psi(S_1 S_2+S_2 S_1) \\
          &=\psi(S_1)^2 - \psi(S_2)^2 +i \psi(S_1) \psi(S_2) + i \psi(S_2) \psi(S_1) = \psi(T)^2.
\end{split}
\end{equation}
That is to say, $\psi$ is a Jordan homomorphism.

We now prove that $\psi$ is injective. We are aware that
\begin{eqnarray*}
Ker(\psi) = \{ \pi(T):~ T \in \mathcal{L}(E), ~ \psi( \pi(T))=0 \} &=& \{ \pi(T): ~ T \in \mathcal{L}(E), ~ \pi( \varphi(T))=0 \} \\
&=& \pi( Ker( \varphi)) .
\end{eqnarray*}
Let $C \in Ker( \varphi)$ and $T \in \mathcal{KC}(E)$, then $ \varphi(T+C)= \varphi(T) \in \mathcal{KC}(E)$.
Since $\varphi$ preserves the set of Kasparov cycles in both directions, $T+C \in \mathcal{KC}(E)$, for all
$T\in \mathcal{KC}(E)$.  In view of Lemma \ref{Ulemma1}, the operator $C$ is compact and so $Ker( \varphi) \subseteq \mathcal{K}(E)$.
The latter inclusion implies that $Ker(\psi) = \pi( Ker( \varphi))= \{ 0 \}$.

The above arguments and Herstein's theorem
(every Jordan automorphism on a prime algebra is either an automorphism or an anti-automorphism \cite{Her})
prove that $\psi$  is either a continuous $*$-automorphism or a continuous $*$-anti-automorphism.
\end{proof}


In the rest of this section we utilize Kadison-Ylinen Theorem (\cite[Theorem 3.1]{ylinen})
to characterize unital surjective linear maps preserving the set $ \sigma ( \pi( \cdot ))$.

\begin{lemma}\label{Ulemma2}
Let $\varphi:\mathcal{L}(E) \rightarrow \mathcal{L}(E)$ be an additive map which is surjective up to compact operators. If
$ \sigma( \pi( \varphi (T)) ) = \sigma ( \pi(T))$ for every $T\in \mathcal{L}(E)$, then
$\varphi(\mathcal{K}(E))=\mathcal{K}(E)$.
\end{lemma}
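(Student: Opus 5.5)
The plan is to reduce everything to a statement in the Calkin algebra $\mathcal{Q}:=\mathcal{L}(E)/\mathcal{K}(E)$. That statement is that an element $a\in\mathcal{Q}$ with $\sigma(x+a)=\sigma(x)$ for every $x\in\mathcal{Q}$ must be $0$. This is Zemánek's characterization of the Jacobson radical: $a\in \mathrm{rad}(\mathcal{Q})$ if and only if $\sigma(x+a)=\sigma(x)$ for all $x$. Since a C*-algebra is semisimple, $a=0$. I will use this as a black box (one could instead try to extract it from the Kadison--Ylinen theorem quoted above). Surjectivity up to compact operators will be read as: for every $T\in\mathcal{L}(E)$ there is $S\in\mathcal{L}(E)$ with $\varphi(S)-T\in\mathcal{K}(E)$, i.e. $\pi\circ\varphi$ maps onto $\mathcal{Q}$.

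\emph{Step 1: $\varphi(\mathcal{K}(E))\subseteq\mathcal{K}(E)$.} Fix $C\in\mathcal{K}(E)$ and put $a=\pi(\varphi(C))$. For any $T\in\mathcal{L}(E)$, additivity and the hypothesis give
$$\sigma\bigl(\pi(\varphi(T))+a\bigr)=\sigma\bigl(\pi(\varphi(T+C))\bigr)=\sigma\bigl(\pi(T+C)\bigr)=\sigma(\pi(T))=\sigma\bigl(\pi(\varphi(T))\bigr).$$
By surjectivity of $\pi\circ\varphi$, every $x\in\mathcal{Q}$ has the form $\pi(\varphi(T))$. Hence $\sigma(x+a)=\sigma(x)$ for all $x\in\mathcal{Q}$, so $a=0$ and $\varphi(C)\in\mathcal{K}(E)$.

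\emph{Step 2: the kernel of $\pi\circ\varphi$ is exactly $\mathcal{K}(E)$.} Suppose $\pi(\varphi(S))=0$. Then for every $T$,
$$\sigma(\pi(T+S))=\sigma\bigl(\pi(\varphi(T))+\pi(\varphi(S))\bigr)=\sigma\bigl(\pi(\varphi(T))\bigr)=\sigma(\pi(T)).$$
Since $\pi$ is onto, the same radical argument gives $\pi(S)=0$, i.e. $S\in\mathcal{K}(E)$.

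\emph{Step 3: the reverse inclusion $\mathcal{K}(E)\subseteq\varphi(\mathcal{K}(E))$.} Take $K\in\mathcal{K}(E)$ and choose $S$ with $\varphi(S)-K\in\mathcal{K}(E)$. Then $\pi(\varphi(S))=0$, so $S\in\mathcal{K}(E)$ by Step 2. This yields $K\in\varphi(S)+\mathcal{K}(E)$ with $S$ compact, so $\mathcal{K}(E)\subseteq\varphi(\mathcal{K}(E))+\mathcal{K}(E)$.

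The main obstacle is Step 3: upgrading this to the literal equality $\varphi(\mathcal{K}(E))=\mathcal{K}(E)$. The error term $\varphi(S)-K$ is compact, but merely additive $\varphi$ gives no reason for it to lie in $\varphi(\mathcal{K}(E))$. I would first try to use that the preimage under $\varphi$ of the compact operator $K$ may be chosen exactly, i.e. $K=\varphi(S)$ for some $S$, which is what full surjectivity of $\varphi$ would give. Step 2 then places $S$ in $\mathcal{K}(E)$, and equality follows. If only surjectivity up to compacts is available, I would state the conclusion in the form actually needed later: $\varphi(\mathcal{K}(E))\subseteq\mathcal{K}(E)$, and $\mathcal{K}(E)$ is precisely the set of $T$ with $\pi(\varphi(T))=0$.
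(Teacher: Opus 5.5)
Your Steps 1 and 2 are correct and follow the same two-part structure as the paper's proof. The forward inclusion comes from comparing spectra after translating by $a=\pi(\varphi(C))$. The converse half shows that $\varphi(S)\in\mathcal{K}(E)$ forces $S\in\mathcal{K}(E)$. Your reading of ``surjective up to compact operators'' is also the one the paper's proof actually uses.

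The difference is the key lemma. You quote Zem\'anek's characterization of the radical together with semisimplicity. The paper instead proves the special case it needs by hand, using only the test elements $x=\pm a^*$. Since $\sigma(a)=\sigma(\pi(C))=\{0\}$, one gets $\sigma(a\pm a^*)=\sigma(\pm a^*)=\{0\}$. Because $a+a^*$ and $i(a-a^*)$ are self-adjoint, spectral radius equals norm, so both vanish and $a=0$. This is why the paper only needs preimages modulo $\mathcal{K}(E)$ of $\pm\varphi(C)^*$, rather than surjectivity of $\pi\circ\varphi$ onto all of $\mathcal{Q}$. Its converse half is the same argument applied to $b=\pi(S)$ with test element $b^*$. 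Your route is quicker to state, but it imports a deep Banach-algebra theorem where a three-line C*-argument suffices.

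The obstacle you hit in Step 3 is genuine, and it is not a weakness of your proof relative to the paper's. The paper's converse half opens with ``suppose that $C\in\mathcal{K}(E)$ and $\varphi(S)=C$''. That silently assumes every compact operator lies in the range of $\varphi$, which is exact surjectivity. What both arguments really prove is $\varphi^{-1}(\mathcal{K}(E))=\mathcal{K}(E)$, i.e.\ $\varphi(\mathcal{K}(E))=\mathcal{K}(E)\cap\varphi(\mathcal{L}(E))$.

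Under the stated hypotheses the literal equality is false. Take $E=H$ a separable infinite-dimensional Hilbert space, $V$ the unilateral shift ($Ve_n=e_{n+1}$), and $\varphi(T)=VTV^*$.
\begin{itemize}
\item $\pi(\varphi(T))=\pi(V)\pi(T)\pi(V)^*$ with $\pi(V)$ unitary, so $\sigma(\pi(\varphi(T)))=\sigma(\pi(T))$.
\item $\varphi(V^*TV)-T$ has finite rank, so $\varphi$ is surjective up to compacts.
\item Every operator in $\varphi(\mathcal{K}(H))$ annihilates $e_0$, so $\theta_{e_0,e_0}\notin\varphi(\mathcal{K}(H))$.
\end{itemize}
Adding $\langle e_0,Te_0\rangle\theta_{e_0,e_0}$ makes $\varphi$ unital, and then $\theta_{e_0,e_1}$ is still missed.

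So your fallback formulation is the correct statement. It upgrades to equality when $\varphi$ is surjective, and it is all that Theorem \ref{Uth2} uses, since there only well-definedness of $\psi$ is needed. Finally, drop the inclusion $\mathcal{K}(E)\subseteq\varphi(\mathcal{K}(E))+\mathcal{K}(E)$ from Step 3. It holds trivially because $\varphi(0)=0$, so it carries no information.
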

\begin{proof}
Assume that $ \sigma( \pi( \varphi (T)) ) = \sigma ( \pi(T))$ and
there exists a compact operator $C$ such that $\varphi(C)$ is not compact.
Then at least one of $( \varphi(C))^* + \varphi(C)$ and $(\varphi(C))^* - \varphi(C)$ is not compact.
Without loss of generality, we can suppose that $( \varphi(C))^* + \varphi(C)$ is not compact.
We know that
$$\sigma_{l}( \pi(S)) = \sigma_{r}(\pi(S))=\sigma(\pi(S)),$$ for any selfadjoint operator $S$ in $\mathcal{L}(E)$, moreover,
$\sigma (\pi(S)) = {0}$ if and only if $S$ is compact. We therefore have
$\{0\}\neq \sigma ( \pi((\varphi(C))^* + \varphi(C))).$
In view of the  surjectivity of $\varphi$ up to compact operators, there exist
$S \in\mathcal{L}(E)$ and $K \in\mathcal{K}(E)$ such that $\varphi(S) + K = (\varphi(C))^*$, which implies that
\begin{eqnarray*}
\sigma(\pi(S)) = \sigma(\pi(\varphi(S))) = \sigma(\pi(\varphi(S) + K))=
\sigma(\pi((\varphi(C))^*)) &=& (\sigma(\pi(\varphi(C))))^* \\
&=& (\sigma(\pi(C)))^* = \{0\},
\end{eqnarray*}
while
\begin{eqnarray*}
\{0\} \neq \sigma( \pi( \varphi(C) + (\varphi(C))^*)) =
\sigma( \pi( \varphi(C) + (\varphi(C))^*)) &=& \sigma( \pi( \varphi(C+S))) \\
&=& \sigma(\pi(C+S)) =\sigma( \pi(S)),
\end{eqnarray*}
a contradiction. Thus $\varphi(\mathcal{K}(E))\subseteq \mathcal{K}(E)$.

Conversely, suppose that $C \in \mathcal{K}(E)$ and $\varphi(S)= C$,
we have to show that $S$ is a compact operator on $E$.
If on the contrary, $S$ is not compact, then either $S + S^* $ or $S - S^* $ is not compact, say $S + S^* $.
We set $T = \varphi(S^*) $, then
$$\sigma( \pi(T)) = \sigma( \pi(\varphi(S^*))) = \sigma(\pi(S^*)) =
(\sigma(\pi(S)))^* = (\sigma(\pi(\varphi(S))))^*= (\sigma(\pi(C)))^* = \{0\}.$$
Additivity of $\varphi$ implies that
\begin{eqnarray*}
\{0\} \neq \sigma (\pi(S) + (\pi(S))^*) = \sigma(\pi(S +S^*)) &=& \sigma(\pi(\varphi(S + S^*))) \\
 &=& \sigma(\pi(C) + \pi(T)) = \sigma(\pi(T)) = \{0\},
\end{eqnarray*}
which is a contradiction, that is, $\mathcal{K}(E)\subseteq \varphi(\mathcal{K}(E))$.
\end{proof}
\begin{remark}
Let $\varphi:\mathcal{L}(E) \rightarrow \mathcal{L}(E)$ be an additive map which is
surjective up to compact operators and
let $ \sigma_r( \pi( \varphi (T)) ) = \sigma_r ( \pi(T))$ or $ \sigma_l( \pi( (\varphi (T)) ) = \sigma_l ( \pi(T))$ holds
for every $T\in \mathcal{L}(E)$. Following a similar argument, we can establish that $\varphi(\mathcal{K}(E))=\mathcal{K}(E)$.
\end{remark}

A vector space isomorphism is a bijective linear map between two vector spaces that preserves
vector addition and scalar multiplication. A linear map $ \psi : \mathcal{A} \to \mathcal{B}$
between C*-algebras is called \emph{positive} if $ \psi(T)$ is
positive for each positive $T \in \mathcal{A}$. The following theorem is derived from Ylinen's results.
\begin{theorem}\label{ylinen} (Kadison $\&$ Ylinen \cite[Theorem 3.1]{ylinen})
Let $ \psi : \mathcal{A} \to \mathcal{B}$ be a vector isomorphism between unital C*-algebras and
$\psi(1_{ \mathcal{A}})=1_{ \mathcal{B}}$. Then the following two conditions are equivalent
\begin{enumerate}
  \item $ \psi $ is a C*-isomorphism, i.e., $ \psi(T^*)=( \psi(T))^*$ and $ \psi(S^n)=( \psi(S))^n$, for every
  selfadjoint element $S$ and natural number $n$.
  \item $ \psi $ is bipositive, i.e., the linear maps $ \psi $ and $ \psi^{-1} $ are positive.
\end{enumerate}
\end{theorem}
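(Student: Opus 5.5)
The plan is to reduce the equivalence to Kadison's Schwarz-type inequality for unital positive maps, applied once to $\psi$ and once to $\psi^{-1}$. Throughout, $\psi:\mathcal{A}\to\mathcal{B}$ is a unital vector isomorphism, and $\mathcal{A}_{sa}$, $\mathcal{B}_{sa}$ denote the selfadjoint parts.

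\emph{(1) $\Rightarrow$ (2).} First, $\psi$ is positive. Every positive $T\in\mathcal{A}$ equals $S^2$ with $S=T^{1/2}$ selfadjoint. Then $\psi(T)=\psi(S)^2$, and $\psi(S)$ is selfadjoint because $\psi$ is $*$-preserving, so $\psi(T)\geq 0$.

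For $\psi^{-1}$, I would first show that $\psi(\mathcal{A}_{sa})=\mathcal{B}_{sa}$. Take $b=b^*\in\mathcal{B}$ and write $b=\psi(a_1)+i\psi(a_2)$ with $a_1,a_2$ selfadjoint. Since $\psi(a_1)$ and $\psi(a_2)$ are selfadjoint, $b=b^*$ forces $\psi(a_2)=0$. Injectivity then gives $a_2=0$, so $b=\psi(a_1)$.

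Next, $\psi^{-1}$ is $*$-preserving: apply $*$-preservation of $\psi$ to $\psi^{-1}(b)$. Finally, $\psi^{-1}(b^n)=a^n=(\psi^{-1}(b))^n$ whenever $b=\psi(a)$ with $a$ selfadjoint. So $\psi^{-1}$ satisfies (1), and by the previous paragraph it is positive.

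\emph{(2) $\Rightarrow$ (1).} The steps are as follows.

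\textbf{Step 1 ($*$-preservation).} A positive linear map sends selfadjoint elements to selfadjoint ones, since $a=a_+-a_-$. Splitting $T=S_1+iS_2$ then gives $\psi(T^*)=\psi(T)^*$.

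\textbf{Step 2 (boundedness).} For selfadjoint $a$ we have $-\|a\|1\le a\le\|a\|1$. Since $\psi(1)=1$, positivity gives $\|\psi(a)\|\le\|a\|$. The same holds for $\psi^{-1}$, which is also unital and positive.

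\textbf{Step 3 (Kadison's inequality).} The key ingredient is
$$\phi(a^2)\ \geq\ \phi(a)^2,\qquad a\in\mathcal{A}_{sa},$$
valid for any unital positive map $\phi$. To prove it, restrict $\phi$ to the commutative unital C*-algebra $C^*(a,1)\cong C(\sigma(a))$. A positive map on a commutative C*-algebra is completely positive. Stinespring's theorem then gives $\phi(x)=V^*\rho(x)V$ with $V^*V=1$, whence
$$\phi(a)^2=V^*\rho(a)VV^*\rho(a)V\le V^*\rho(a)^2V=\phi(a^2).$$
An alternative that avoids dilation is to approximate $a$ in norm by finite real combinations $\sum\lambda_j e_j$ of orthogonal positive elements summing to $1$, for which the inequality is a direct Cauchy--Schwarz computation. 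I expect this step to be the main obstacle.

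\textbf{Step 4 (squares are preserved).} Fix $a\in\mathcal{A}_{sa}$ and put $b=\psi(a)\in\mathcal{B}_{sa}$. Kadison's inequality for $\psi$ gives $\psi(a^2)\ge\psi(a)^2$. Applying the inequality to $\psi^{-1}$ at $b$ gives $\psi^{-1}(b^2)\ge a^2$. Applying the positive map $\psi$ to this yields $\psi(a)^2\ge\psi(a^2)$. Combining the two, $\psi(a^2)=\psi(a)^2$ for all selfadjoint $a$.

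\textbf{Step 5 (all powers).} Polarizing with $a+c$ for $a,c\in\mathcal{A}_{sa}$ gives
$$\psi(ac+ca)=\psi(a)\psi(c)+\psi(c)\psi(a).$$
Taking $c=a^n$, which is selfadjoint, we have $a^{n+1}=\tfrac12(a\,a^n+a^n a)$, so induction on $n$ yields $\psi(a^{n+1})=\psi(a)^{n+1}$. This is condition (1).
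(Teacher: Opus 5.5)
Your proof is correct. The paper gives no argument of its own for this statement: it is quoted from \cite[Theorem 3.1]{ylinen} and used as a black box in Theorem \ref{Uth2}. What you wrote is the standard Kadison argument for unital order isomorphisms. You apply the Schwarz-type inequality $\phi(a^2)\ge\phi(a)^2$ to $\psi$, and to $\psi^{-1}$ pushed back through the positive map $\psi$. This squeezes $\psi(a^2)$ against $\psi(a)^2$ from both sides, and polarization with $c=a^n$ then gives all powers. Your direction (1)$\Rightarrow$(2), via $T=(T^{1/2})^2$ and $\psi(\mathcal{A}_{sa})=\mathcal{B}_{sa}$, is also complete.

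Compared with the bare citation, your route makes the result self-contained modulo two standard facts. The first is that positive maps out of commutative C*-algebras are completely positive. The second is Stinespring's theorem, applied after representing $\mathcal{B}\subseteq\mathcal{B}(H)$ faithfully; positivity in $\mathcal{B}$ agrees with positivity in $\mathcal{B}(H)$. Your route also shows exactly where each half of bipositivity is consumed in Step 4. That is worth keeping in mind where the paper invokes the theorem from spectrum preservation alone. A map such as $x\mapsto sxs^{-1}$, with $s$ invertible but not unitary, is unital and spectrum-preserving yet not positive.

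One small correction concerns your dilation-free alternative in Step 3. In $C^*(a,1)\cong C(\sigma(a))$, orthogonal positive elements summing to $1$ are projections, and these need not exist when $\sigma(a)$ is connected. Use instead a fine partition of unity $\{e_j\}$, with $\lambda_j$ in the support of $e_j$, so that $\sum_j\lambda_je_j\approx a$ and $\sum_j\lambda_j^2e_j\approx a^2$. With the column $C=(\phi(e_j)^{1/2})_j$ one has $C^*C=1$, hence $CC^*\le 1$, and with $D=\mathrm{diag}(\lambda_j)$,
$$\Bigl(\sum_j\lambda_j\phi(e_j)\Bigr)^2=C^*D\,CC^*\,DC\le C^*D^2C=\sum_j\lambda_j^2\phi(e_j).$$
The norm bound from your Step 2 and closedness of the positive cone then let you pass to the limit. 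Since your Stinespring argument already settles Step 3, this affects nothing else.
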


\begin{theorem}\label{Uth2}
Let $\varphi:\mathcal{L}(E)\rightarrow \mathcal{L}(E)$  be a linear map which is unital and surjective up to compact operators.
If $ \mathcal{L}(E)/\mathcal{K}(E)$ is a prime C*-algebra, then the following statements are equivalent.
\begin{enumerate}
  \item $ \sigma( \pi( \varphi (T)) ) = \sigma ( \pi(T))$ for every $T\in \mathcal{L}(E)$.
  \item $\varphi(\mathcal{K}(E)) \subseteq \mathcal{K}(E)$ and the induced map
$\psi:\mathcal{L}(E)/\mathcal{K}(E) \rightarrow \mathcal{L}(E)/\mathcal{K}(E)$,  $\psi  (\pi(T))=\pi(\varphi(T))$ for all
$T \in \mathcal{L}(E),$ is either a continuous $*$-automorphism or a continuous $*$-anti-automorphism.
\end{enumerate}
\end{theorem}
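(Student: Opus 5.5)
(2)$\Rightarrow$(1) is immediate: a unital $*$-automorphism or $*$-anti-automorphism $\psi$ of the Calkin algebra preserves invertibility in both directions and $\psi(\lambda 1)=\lambda 1$. Hence $\sigma(\psi(a))=\sigma(a)$, and so $\sigma(\pi(\varphi(T)))=\sigma(\psi(\pi(T)))=\sigma(\pi(T))$.

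For (1)$\Rightarrow$(2) I plan to derive (1) of Theorem \ref{Uth1} and then invoke that theorem. Lemma \ref{Ulemma2} first gives $\varphi(\mathcal{K}(E))=\mathcal{K}(E)$, so the induced map $\psi$ is well defined. It is surjective, since $\varphi$ is surjective up to compacts, and it preserves the spectrum. It is injective: if $\pi(\varphi(T))=0$, then $\sigma(\pi(T))=\{0\}$ for every such $T$. Applying this to $T^{*}$ is not directly available, so I would instead argue as in the converse half of Lemma \ref{Ulemma2}, via additivity and self-adjoint parts, to obtain $T\in\mathcal{K}(E)$. Thus $\psi$ is a unital, spectrum-preserving vector isomorphism of $\mathcal{L}(E)/\mathcal{K}(E)$.

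The cleanest route is Theorem \ref{ylinen}. I will show that $\psi$ is bipositive, i.e.\ that it maps self-adjoint elements to self-adjoint elements; positivity then follows because the spectrum of a self-adjoint element decides positivity. The key step is $\psi(S)^*=\psi(S)$ for self-adjoint $S$. Set $a=\psi(S)$, write $a=h+ik$ with $h,k$ self-adjoint, and choose $R$ with $\psi(R)=k$ by surjectivity. Then for all real $t$,
\[
\sigma(h+ik+tk)=\sigma(\psi(S+tR))=\sigma(S+tR).
\]
I would try to exploit real-valuedness of the left side (from $S$) to force $k=0$. The known approach is that a unital spectrum-preserving surjection of C*-algebras maps self-adjoints to elements with real spectrum for all real combinations, and Kadison-type results then give hermitian elements. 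Concretely, $\sigma(S+\lambda R')\subseteq\mathbb{R}$ for self-adjoint $S,R'$ and real $\lambda$ implies $\sigma(\psi(S)+\lambda\psi(R'))\subseteq\mathbb{R}$. Taking $\psi(R')$ to range over preimages, I would use that an element $a$ such that $a+x$ has real spectrum for all self-adjoint $x$ in a suitable family must be self-adjoint. I expect this step to be the main obstacle. I would first try to prove that $\psi$ preserves the numerical range via the Vesentini-type estimate $\|\exp(i t a)\|$, using the spectral radius of $e^{ita}$, or alternatively deduce that $\psi$ maps unitaries to unitaries. The latter would let me reuse the expansion argument already given in the proof of Theorem \ref{Uth1}.

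Once self-adjointness is established, $\psi$ preserves positivity because $\sigma(\psi(S))=\sigma(S)\subseteq[0,\infty)$. The inverse $\psi^{-1}$ also preserves the spectrum and, by the same argument, self-adjointness, so it is positive too. Theorem \ref{ylinen} then makes $\psi$ a Jordan $*$-isomorphism ($\psi(S^2)=\psi(S)^2$, and polarization gives $\psi(T^2)=\psi(T)^2$). It is continuous because positive unital maps are bounded. Herstein's theorem on the prime algebra $\mathcal{L}(E)/\mathcal{K}(E)$ then yields (2).
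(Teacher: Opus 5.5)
The step you flag as the main obstacle, namely $\psi(S)^*=\psi(S)$ for self-adjoint $S$, is not just unproved: it is false, and so is the statement as written. You need it for bipositivity in Theorem \ref{ylinen}, or alternatively you need $\psi$ to preserve unitaries so that Theorem \ref{Uth1} applies; both fail. Let $H$ be a separable infinite-dimensional Hilbert space and $E=H\oplus H$. Its Calkin algebra is simple, hence prime. Put $S(x,y)=(x,2y)$ and $\varphi(T)=S^{-1}TS$. Then $\varphi$ is linear, unital and bijective, and (1) holds because $\pi(\varphi(T))$ is similar to $\pi(T)$.

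For $T(x,y)=(y,0)$ one computes $\varphi(T)=2T$ and $\varphi(T^*)=\tfrac12T^*$. So the self-adjoint unitary $U=T+T^*$ is sent to $2T+\tfrac12T^*$. Its class has real spectrum $\{-1,1\}$, but it differs from its adjoint by the non-compact operator $\tfrac32(T-T^*)$. Thus $\psi$ is not $*$-preserving, and not positive either, since $\psi(\pi(I+U))$ is not self-adjoint. Hence (2) fails. Moreover $\varphi(U)^*\varphi(U)=\tfrac14I\oplus 4I$, so $\varphi$ does not preserve Kasparov cycles, and your reduction to Theorem \ref{Uth1} fails as well.

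The tools you propose break down on this same example. With $s=\pi(S)$, the real subspace $s^{-1}(\mathcal{L}(E)/\mathcal{K}(E))_{sa}\,s$ contains $1$, consists of elements with real spectrum, and spans the algebra over $\mathbb{C}$, yet it is not the self-adjoint part. The Vidav--Palmer criterion needs $\|e^{ita}\|=1$, whereas spectral data only give $\rho(e^{ita})=1$.

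The paper's own proof has exactly the same hole. The earlier steps are fine: Lemma \ref{Ulemma2}, injectivity, and the spectral-radius continuity argument. Your injectivity argument via the second half of Lemma \ref{Ulemma2} is also sound. But the paper then invokes Theorem \ref{ylinen} directly from spectrum preservation and never establishes bipositivity.

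Under (1), the most one could hope for is that $\psi$ is an automorphism or anti-automorphism without the $*$. Even that Jordan property is an instance of Kaplansky's problem, which is open for general C*-algebras; known positive answers use extra structure such as real rank zero. It cannot be reached through Theorem \ref{ylinen}.
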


\begin{proof}
It is easy to see that (2) implies (1) by the definition of $\psi$ and the fact that $\psi$
is a unital $*$-isomorphism.

For the converse, by Lemma \ref{Ulemma2}, $\varphi(\mathcal{K}(E)) = \mathcal{K}(E)$, and so
the map $\psi:\mathcal{L}(E)/\mathcal{K}(E) \rightarrow \mathcal{L}(E)/\mathcal{K}(E)$,
$\psi  (\pi(T))=\pi(\varphi(T))$, is well defined. Moreover
\begin{eqnarray*}
\lambda \not \in \sigma( \psi (\pi(T)) ) & \Longleftrightarrow & \lambda \not \in \sigma( \pi(\varphi(T)) )\\
& \Longleftrightarrow & \lambda \not \in \sigma ( \pi(T)).
\end{eqnarray*}
Consequently, $  \sigma( \psi (\pi(T)) ) = \sigma ( \pi(T)) $ for every $T\in \mathcal{L}(E)$.

Continuity of the map $\psi$ is deduced from Theorem 5.5.2 of \cite{Aupetit1}, surjectivity of $\psi$
together with the following equalities
\begin{eqnarray*}
\rho ( \psi (\pi(T)) ) &=& \textrm{max}\, \{ | \lambda |:~ \lambda \in  \sigma( \psi (\pi(T)) ) \} \\
                       &=& \textrm{max}\, \{ | \lambda |:~ \lambda \in  \sigma( \pi(\varphi(T)) \} \\
                       &=& \textrm{max}\, \{ | \lambda |:~ \lambda \in  \sigma( \pi(T)) \}= \rho (\pi(T)).
\end{eqnarray*}

The map $\psi$ is injective. To see this, let $\pi(S)$ be a selfadjoint element in $Ker( \psi )$, then
$$ \| \pi(S) \|= \rho ( \pi(S) )= \rho ( \psi (\pi(S)) )=0,$$ and so $\pi(S)=0$. According to the
decomposition of an operator into the sum of a selfadjoint with an anti-selfadjoint operator, we infer $Ker( \psi )=\{0 \}$.

In view of the equality $  \sigma( \psi (\pi(T)) ) = \sigma ( \pi(T)) $ and Theorem \ref{ylinen}, the linear
map $\psi$ is a C*-isomorphism on the C*-algebra $\mathcal{L}(E)/\mathcal{K}(E)$. We therefore have,
$$ \psi(T^*)=( \psi(T))^*,~ \textrm{for~every~}T \in \mathcal{L}(E)/\mathcal{K}(E), \textrm{~ and}$$
$$ \psi(S^2)=( \psi(S))^2, ~\textrm{for~ every~ selfadjoint~element~} S \in \mathcal{L}(E)/\mathcal{K}(E).$$
By repeating the same arguments as (\ref{U7}) and (\ref{U8}), $\psi$ is a Jordan homomorphism. Utilizing
Herstein's theorem on the prime algebra $\mathcal{L}(E)/\mathcal{K}(E)$, we find that
$\psi$  is either a continuous $*$-automorphism or a continuous $*$-anti-automorphism.

\end{proof}

In operator algebras, the real rank of a C*-algebra is a noncommutative analogue of
Lebesgue covering dimension which was first introduced by Brown and Pedersen.
By definition, a unital C*-algebra $ \mathcal{A}$ has real rank zero if and only if
the invertible self-adjoint elements of $ \mathcal{A}$ are dense in the self-adjoint
elements of $ \mathcal{A}$, cf. \cite[\S V.7]{Davidson}.
The main result of \cite{AghaHejazi}, Theorem 1.2, establishes that if
$\varphi:\mathcal{L}(E)\rightarrow \mathcal{L}(E)$  is a unital, surjective linear map
preserving semi-$ \mathcal{A}$-Fredholm
operators in both directions, and $\mathcal{L}(E)/\mathcal{K}(E)$ is a prime C*-algebra with real rank zero, then
$\varphi(\mathcal{K}(E)) \subseteq \mathcal{K}(E)$ and the induced map
$\tilde{\varphi} :\mathcal{L}(E)/\mathcal{K}(E) \rightarrow \mathcal{L}(E)/\mathcal{K}(E)$,
$\tilde{\varphi}   (\pi(T))=\pi(\varphi(T))$ for all
$T \in \mathcal{L}(E),$ is either an automorphism or an anti-automorphism.
The authors claim to have proven the theorem for semi-$ \mathcal{A}$-Fredholm operators, however,
the paragraph following Definition 3.6 considers the intersection of $\sigma_l$ and $\sigma_r$.
This implies that the proof of the main result may only hold for $ \mathcal{A}$-Fredholm operators.

It is known that the set of Kasparov cycles is a subset of
$ \mathcal{A}$-Fredholm operators. The following theorem extends the main result of  \cite{AghaHejazi}
to the context of Kasparov cycles, showing that the assumption ``$\mathcal{L}(E)/\mathcal{K}(E)$ should be with real rank zero''
is redundant in our results. For $E=H$ a Hilbert space, the Calkin algebra $\mathcal{B}(H)/\mathcal{K}(H)$ is prime,
so our results are of interest alongside those of Mbekhta \cite{Mbekhta1, Mbekhta2} and Hou-Cui \cite{HouCui1}.

\begin{theorem}\label{Uth3}
Let $\varphi:\mathcal{L}(E)\rightarrow \mathcal{L}(E)$  be a linear map which is unital and surjective up to compact operators.
If $ \mathcal{L}(E)/\mathcal{K}(E)$ is a prime C*-algebra, then the following statements are equivalent.
\begin{enumerate}
\item $ \sigma( \pi( \varphi (T)) ) = \sigma ( \pi(T))$ for every $T\in \mathcal{L}(E)$.
\item $\varphi$ preserves Kasparov cycles in both directions.
\item $\varphi(\mathcal{K}(E)) \subseteq \mathcal{K}(E)$ and the induced map
$\psi:\mathcal{L}(E)/\mathcal{K}(E) \rightarrow \mathcal{L}(E)/\mathcal{K}(E)$,  $\psi  (\pi(T))=\pi(\varphi(T))$ for all
$T \in \mathcal{L}(E),$ is either a continuous $*$-automorphism or a continuous $*$-anti-automorphism.
\end{enumerate}
\end{theorem}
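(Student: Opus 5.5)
The statement follows by combining the two theorems already proved. Both Theorem \ref{Uth1} and Theorem \ref{Uth2} have exactly the same standing hypotheses: $\varphi$ is linear, unital and surjective up to compact operators, and $\mathcal{L}(E)/\mathcal{K}(E)$ is prime. Both also have the same condition (2), which is literally condition (3) of the present statement. So I would obtain the equivalences (1)$\Leftrightarrow$(3) and (2)$\Leftrightarrow$(3) directly and chain them through (3).

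Concretely, I would proceed in three steps.

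\textbf{Step 1.} Theorem \ref{Uth2} gives (1)$\Leftrightarrow$(3) verbatim.

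\textbf{Step 2.} Theorem \ref{Uth1} gives (2)$\Leftrightarrow$(3) verbatim.

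\textbf{Step 3.} Transitivity then yields (1)$\Leftrightarrow$(2).

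Because the reduction is so short, I would also sketch the direct arguments from (3) as a sanity check, since the earlier proofs only say "it is easy to see". Assume (3).

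For (3)$\Rightarrow$(1): $\psi$ is a unital algebra automorphism or anti-automorphism. Such a map sends invertibles to invertibles, and so does $\psi^{-1}$, which is of the same type. Moreover $\psi(\lambda 1 - \pi(T)) = \lambda 1 - \pi(\varphi(T))$. Hence $\sigma(\pi(\varphi(T))) = \sigma(\pi(T))$. Note that for an anti-automorphism, two-sided invertibility is still preserved, even though left and right invertibility are interchanged.

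For (3)$\Rightarrow$(2): $\psi$ is a unital bijective $*$-map that is multiplicative or anti-multiplicative. It therefore maps unitaries of the Calkin algebra bijectively onto unitaries. Since $T \in \mathcal{KC}(E)$ exactly when $\pi(T)$ is unitary, and $\pi(\varphi(T)) = \psi(\pi(T))$, we get $T \in \mathcal{KC}(E) \Leftrightarrow \varphi(T) \in \mathcal{KC}(E)$.

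The only potential obstacle is checking that nothing in the hypotheses differs between the earlier theorems. In particular, the inclusion $\varphi(\mathcal{K}(E)) \subseteq \mathcal{K}(E)$ in (3) is what makes $\psi$ well defined in both directions of the chain, and both earlier theorems establish it within their proofs, from Lemma \ref{Ulemma2} and from the first part of the proof of Theorem \ref{Uth1} respectively. With that confirmed, the proof is the concatenation of Theorems \ref{Uth1} and \ref{Uth2}.
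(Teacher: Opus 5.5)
Your reduction is exactly the paper's proof, which also just cites Theorems~\ref{Uth1} and~\ref{Uth2}. Your direct checks of (3)$\Rightarrow$(1) and (3)$\Rightarrow$(2) are correct. The gap is Step~1: (1)$\Rightarrow$(3) cannot be taken from Theorem~\ref{Uth2}, because that implication is false under the stated hypotheses. So the chain (1)$\Rightarrow$(3)$\Rightarrow$(2) breaks at its first link.

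Here is a counterexample. Let $E=H$ be a separable infinite-dimensional Hilbert space; its Calkin algebra is simple, hence prime. Split $H=H_1\oplus H_2$ with both summands infinite-dimensional, fix a unitary $W:H_1\to H_2$, and put $A=I\oplus 2I$ and $\varphi(T)=ATA^{-1}$.
\begin{itemize}
\item $\varphi$ is linear, unital and bijective, and $\sigma(\pi(\varphi(T)))=\sigma(\pi(A)\pi(T)\pi(A)^{-1})=\sigma(\pi(T))$. So (1) holds.
\item $U(x_1,x_2)=(W^*x_2,\,Wx_1)$ is unitary, whereas $V=\varphi(U)$ is given by $V(x_1,x_2)=(\tfrac12 W^*x_2,\,2Wx_1)$. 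Then $V^*V=4I\oplus\tfrac14 I$, so $V^*V-I$ is not compact.
\item Thus $U\in\mathcal{KC}(E)$ but $\varphi(U)\notin\mathcal{KC}(E)$, and (2) fails.
\item By your own (3)$\Rightarrow$(2) argument, (3) fails too. Indeed $\psi(x)=\pi(A)\,x\,\pi(A)^{-1}$ is an automorphism that is not $*$-preserving.
\end{itemize}

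The error you inherit is in the paper's proof of Theorem~\ref{Uth2}, at the appeal to Theorem~\ref{ylinen}. The Kadison--Ylinen criterion requires $\psi$ and $\psi^{-1}$ to be positive. Preservation of the essential spectrum only sends a positive element to one with spectrum in $[0,\infty)$, not to a self-adjoint one, as the similarity above shows. So nothing forces $\psi$ to be $*$-preserving. Even the weaker conclusion that $\psi$ is an automorphism or anti-automorphism would be an instance of Kaplansky's problem, which the paper's argument does not resolve.

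What your proof, like the paper's, actually establishes is (2)$\Leftrightarrow$(3)$\Rightarrow$(1); the statement as written is false.

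Two smaller points concern the (2)$\Rightarrow$(3) half:
\begin{itemize}
\item Surjectivity of $\psi$ needs ``surjective up to compacts'' to mean $\varphi(\mathcal{L}(E))+\mathcal{K}(E)=\mathcal{L}(E)$; the paper's literal definition is vacuous.
\item The identity $Ker(\psi)=\pi(Ker(\varphi))$ in the proof of Theorem~\ref{Uth1} should read $Ker(\psi)=\pi(\varphi^{-1}(\mathcal{K}(E)))$. The same Lemma~\ref{Ulemma1} argument handles this, because a compact perturbation of a Kasparov cycle is again one.
\end{itemize}
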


\begin{proof}
The results follow immediately from Theorem \ref{Uth1} and Theorem \ref{Uth2}.
\end{proof}

\vspace{1.5em}

\subsection*{Funding} We did not receive any specific grant.
\subsection*{Data Availability}
No data was used.
\subsection*{Acknowledgment.} The author would like to thank Michael Frank (Leipzig) and Ralf Meyer (G\"ottingen)
for thier helpful comments on a preliminary version of this manuscript.


\begin{thebibliography}{99}
\bibitem {AghaHejazi} T. Aghasizadeh and S. Hejazian, Maps preserving semi-Fredholm operators on Hilbert C*-modules,
\textit{ J. Math. Anal. Appl.} {\bf 354} (2009) 625-629.
\bibitem {Aupetit1} B. Aupetit, \textit{ A primer on spectral theory}, Springer-Verlag, New York, 1991.
\bibitem{Black} B. Blackadar, \textit{K-theory for operator algebras},
Mathematical Sciences Research Institute Publications, Cambridge University Press, Cambridge, 1998.

\bibitem{BM} A. Bourhim and M. Mbekhta, Maps preserving the local reduced minimum modulus,
{\it J. Math. Anal. Appl.} {\b 528} (2023), no. 1, Paper No. 127512, 16 pp.
\bibitem {BDF}  L.G. Brown, R.G. Douglas, P.A. Fillmore,
Unitary equivalence modulo the compact operators and extensions of C*-algebras,
{\it Proc. Conf. Operator Theory (Dalhousie Univ. Halifax, N.S. 1973)}, Lecture Notes in Mathematics 345, Springer (1973) 58-128.
\bibitem {bunkeh} U. Bunke and T. Hirschmann, The index of the scattering operator
on the positive spectral subspace, {\it Commun. Math. Phys.} {\bf 148} (1992) 487-502.

\bibitem{Cui1} J. Cui and J. Hou, Additive maps on standard operator algebras preserving parts of the spectrum,
{\it J. Math. Anal. Appl.} {\bf 282} (2003), no. 1, 266-278.



\bibitem{Davidson} K.R. Davidson, \textit{ C*-Algebras by Example}, Field Inst. Monographs, vol. 6, Amer. Math. Soc., Providence, RI, 1996.

\bibitem {DIEU} J. Dieudonn\'{e}, Sur une g\'{e}n\'{e}ralisation du groupe orthogonal \'{a} quatre variables, \textit{ Arch.
Math.} {\bf 1} (1949) 282-287.

\bibitem {FMP} M. Frank, A.S. Mishchenko and A.A. Pavlov, Orthogonality-preserving,
C*-conformal and conformal module mappings on Hilbert
C*-modules, {\it J. Funct. Anal.} {\bf 260} (2011), no. 2, 327-339.

\bibitem {FRMed} M. Frank, C*-submodule preserving module mappings on Hilbert C*-modules,
{\it Mediterr. J. Math.} {\bf23}  (2026), 82.



\bibitem {Her} I.N. Herstein, Jordan homomorphisms, {\it Transactions of the American Mathematical Society}, {\bf 81} (1956), 331-341.

\bibitem {HouCui1} J. Hou and J. Cui, Linear maps preserving essential spectral functions and closeness of operator ranges,
{\it Bull. Lond. Math. Soc.} {\bf39} (2007), no. 4, 575-582.

\bibitem{Jaf1} A.A. Jafarian and  A.R. Sourour, Spectrum-preserving linear maps, \textit{ J. Funct. Anal.} {\bf 66} (1986), 255-261.
\bibitem {KAS} G. Kasparov. The operator K-functor and extensions of C*-algebras. Izv. Akad. Nauk. SSSR Ser. Mat. 44 (1980), 571-636.


\bibitem{Kaplan} I. Kaplansky, \textit{Algebraic and analytic aspects of operator algebras}, CBMS Regional Conference Series in
Math. American Mathematical Society, Providence, 1970.
\bibitem {LAN} E.C. Lance, {\it Hilbert $C^*$-Modules}, LMS Lecture Note
Series 210, Cambridge Univ. Press, 1995.




\bibitem {Mbekhta1} M. Mbekhta,  Linear maps preserving the set of Fredholm
operators, {\it Proc. Amer. Math. Soc.} {\bf135} (2007), no. 11, 3613-3619.
\bibitem{Mbekhta2} M. Mbekhta, Linear maps preserving the minimum and surjectivity moduli of operators,
{\it Oper. Matrices} {\bf 4} (2010), no. 4, 511-518.
						
\bibitem {SharAtiy} K. Sharifi, Atiyah-J\"{a}nich theorem for $\sigma$-C*-algebras,
{\it Appl. Categ. Structures} {\bf 25} (2017), 893-905.


\bibitem{ylinen} K. Ylinen, Vector space isomorphisms of C*-algebras, {\it Studia Math.}  {\bf 46} (1973), 31-34.



\end{thebibliography}
\end{document}